\newtheorem{theorem}{Theorem}[section]
\newtheorem{lemma}[theorem]{Lemma}
\newtheorem{proposition}[theorem]{Proposition}
\newtheorem{definition}[theorem]{Definition}
\newtheorem{remark}[theorem]{Remark}
\newtheorem*{remark*}{Remark}
\Crefname{conjecture}{Conjecture}{Conjectures}
\theoremstyle{plain}
\theoremstyle{plain}
\newcommand{\N}{\mathbb{N}}
\newcommand{\Z}{\mathbb{Z}}
\newcommand\smod[1]{\ \left(\operatorname{mod} #1\right)}
\newcommand{\Q}{\mathbb{Q}}
\newcommand{\R}{\mathbb{R}}
\newcommand{\C}{\mathbb{C}}
\newcommand{{\D}}{\delta}
\newcommand{\F}{\mathcal{F}}
\newcommand{\bbH}{\mathbb{H}}
\newcommand{\eps}{\varepsilon}
\newcommand{\GL}{\operatorname{GL}}
\newcommand{\SL}{\operatorname{SL}}
\renewcommand{\Re}{\operatorname{Re}}
\newcommand{\SLZ}{\SL_2(\Z)}
\newcommand{\abcd}{\left(\begin{smallmatrix} a & b \\ c & d \end{smallmatrix}\right)}
\newcommand{\calQ}{\mathcal{Q}}
\newcommand{\z}{\tau}
\newcommand{\Gal}{\operatorname{Gal}}
\newcommand{\calM}{\mathcal{M}}
\renewcommand{\P}{\mathcal{P}}
\numberwithin{equation}{section}
\numberwithin{table}{section}
\author{Michael H. Mertens and Larry Rolen}
\address{Mathematisches Institut der Universit\"at zu K\"oln, Weyertal 86-90,
D-50931 K\"oln, Germany} 
\email{mmertens@math.uni-koeln.de}
\email{lrolen@math.uni-koeln.de}
\thanks{
The research of the first author leading to these results has received funding from the European Research Council under the European Union's Seventh Framework Programme (FP/2007-2013) / ERC Grant agreement n. 335220 - AQSER. The second author thanks the University of Cologne and the DFG for their generous support via the University of Cologne postdoc grant DFG Grant D-72133-G-403-151001011, funded under the Institutional Strategy of the University of Cologne within the German Excellence Initiative. }
\title[Class invariants for non-holomorphic modular functions]{On class invariants for non-holomorphic modular functions and a question of Bruinier and Ono}
\begin{document}

\begin{abstract}
Recently, Bruinier and Ono found an algebraic formula for the partition function in terms of traces of singular moduli of a certain non-holomorphic modular function. In this paper we prove that the rational polynomial having these singular moduli as zeros is (essentially) irreducible, settling a question of Bruinier and Ono. The proof uses careful analytic estimates together with some related work of Dewar and Murty, as well as extensive numerical calculations of Sutherland. 
\end{abstract}

\maketitle

\section{Introduction}
A \emph{partition} of a natural number $n$ is a non-increasing sequence of natural numbers which sum up to $n$. Let $p(n)$ denote the number of partitions of $n$. Despite its elementary definition, it is computationally infeasible to compute this number directly for large $n$. A much more efficient method for computing the partition number was offered by Euler's recursive formula for $p(n)$, obtained as a consequence of his pentagonal number theorem \cite{Euler}. Much later, Hardy and Ramanujan \cite{HR} found the asymptotic formula
\[p(n)\sim  \frac{1}{4n\sqrt{3}}e^{\pi\sqrt{\frac{2n}{3}}}\quad\text{as }n\rightarrow\infty,\]
developing and employing a device which has become essential in analytic number theory, the so-called Circle Method. By refining their method, Rademacher \cite{Rademacher} found his famous exact formula for $p(n)$,
\[p(n)=\frac{2\pi}{(24n-1)^\frac 34} \sum_{k=1}^\infty \frac{A_k(n)}{k}I_\frac 32\left(\frac{\pi\sqrt{24n-1}}{6k}\right),\]
where $I_\frac 32$ is the modified Bessel function of the first kind and $A_k(n)$ is a certain Kloosterman sum.

More recently, Bruinier and Ono \cite{BO} proved that $p(n)$ can also be computed as a finite sum of distinguished algebraic numbers. More precisely, they showed that
\[p(n)=\frac{1}{24n-1}\sum\limits_{Q\in\calQ_{{\D}}} P(\z_Q),\]
where $\calQ_{\D}$ is a set of representatives of positive definite binary integral quadratic forms $Q(x,y)=ax^2+bxy+cy^2$ of discriminant ${\D}:=-24n+1$ satisfying $6\vert a$, modulo the action of the group $\Gamma_0(6)$, where we choose a set of representatives such that $b\equiv 1\smod{12}$. Moreover, $\z_Q$ is the unique point in the upper half-plane $\bbH$ satisfying $Q(\z_Q,1)=0$, and $P$ is a certain Maass form of weight $0$ defined in \eqref{P}.

In particular, Bruinier and Ono proved that $P(\z_Q)$ is an algebraic number, and in fact it is known that $(24n-1)P(\z_Q)$ is an algebraic integer \cite{LR}. For related work studying and applying formulas for the partition function in terms of traces of singular moduli, the interested reader is also referred to \cite{BringmannOno,FolsomMasri,Lee}. In this context, it is natural to define the polynomial
\begin{gather}\label{Hn}
H_{{\D}}(x):=\prod\limits_{Q\in\calQ_{{\D}}}(x-P(\z_Q))\in\Q[x].
\end{gather}
By an elementary calculation (see Lemma 3.7 of \cite{BOS}), this factors over $\Q$ as
\[H_{{\D}}(x)=\prod\limits_{\substack{f>0 \\ f^2|{{\D}}}} \eps(f)^{h\left(\frac {{\D}}{f^2}\right)} \widehat H_{\frac{{{\D}}}{f^2}}(\eps(f)x),\]
where $h(d)$ denotes the class number of discriminant $d$, $\eps(f)=1$ if $f\equiv \pm 1\smod{12}$ and $\eps(f)=-1$ otherwise. We also set 
\begin{gather}\label{Hn'}
\widehat{H}_{{\D}}(x):=\prod\limits_{Q\in \P_{\D}} (x-P(\z_Q)),
\end{gather}
where $\P_{\D}$ is the set of \emph{primitive} forms in $\calQ_{{\D}}$, i.e., those forms for which $\gcd (a,b,c) =1$.

Bruinier and Ono \cite{BO} asked whether $\widehat H_{\D}(x)$ is irreducible and in \cite{BOS} they produced, together with Sutherland, very strong numerical evidence for the affirmative answer. In this paper, we settle their question.
\begin{theorem}\label{main}
The polynomial $\widehat H_{{\D}}(x)$ is irreducible over $\Q$. Moreover we have that 
$\Omega_{t}$ is the splitting field of $\widehat H_{{\D}}(x)$,
where ${\D}=-24n+1=t^2 d$ with $d$ a fundamental discriminant and $\Omega_t$ the ring class field of the order of conductor $t$ in $K:=\Q(\sqrt{d})$.
\end{theorem}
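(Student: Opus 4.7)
The plan is to combine a class-field-theoretic description of the Galois action on the roots of $\widehat{H}_{{\D}}$ with analytic size estimates that force those roots to be pairwise distinct. By Bruinier-Ono's construction of $P$ as (essentially) a regularized theta lift and the Shimura-reciprocity framework available in that setting, every value $P(\tau_Q)$ lies in the ring class field $\Omega_t$, and $\Gal(\Omega_t/K)\cong\operatorname{Cl}(\mathcal{O}_t)$ acts on the set $\{P(\tau_Q):Q\in\P_{{\D}}\}$ compatibly with its simply transitive action on primitive form classes of discriminant ${\D}$. Consequently, over $K$ the polynomial $\widehat{H}_{{\D}}$ factors as $g(x)^m$, with $g$ irreducible over $K$ and $m$ counting coincidences $P(\tau_{Q_1})=P(\tau_{Q_2})$ for inequivalent $Q_1,Q_2$. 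Since any rational polynomial irreducible over the quadratic extension $K$ is automatically irreducible over $\mathbb{Q}$, the whole theorem reduces to showing $m=1$. Once this distinctness is in hand, $K(P(\tau_{Q_0}))=\Omega_t$ by a degree count, and a short Galois argument (using the dihedral structure of $\Gal(\Omega_t/\mathbb{Q})$ together with the fact that complex conjugation acts on the roots via $[Q]\mapsto [-Q]$ and is therefore non-trivial for non-ambiguous classes) identifies the splitting field over $\mathbb{Q}$ as $\Omega_t$ as well.

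To establish distinctness I would split into two regimes. For $|{\D}|$ sufficiently large, I would use the explicit Fourier expansion of the Maass form $P$ from Bruinier-Ono: at a CM point $\tau_Q=(-b+i\sqrt{|{\D}|})/(2a)$, the principal exponential contribution yields an asymptotic of the shape $P(\tau_Q)=c_0\,e^{\pi\sqrt{|{\D}|}/a}e^{-\pi ib/a}(1+\mathrm{error})$, where the error is controlled by effective bounds on the non-principal Fourier coefficients of $P$ coming from, and parallel to, the work of Dewar and Murty. Given two inequivalent pairs $(a_1,b_1)\neq(a_2,b_2)$, either the $a$-values differ, in which case the moduli separate by a factor $e^{\pi\sqrt{|{\D}|}\,|1/a_1-1/a_2|}$ that outpaces the error once $|{\D}|$ exceeds an effective threshold; or the $a$-values coincide and $b_1\neq b_2$, in which case the principal-term phases $e^{-\pi ib_j/a}$ already differ, and a quantitative comparison of subleading terms forces $P(\tau_{Q_1})\neq P(\tau_{Q_2})$. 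For the finitely many small ${\D}$ that the analytic argument cannot yet resolve, Sutherland's extensive numerical computations verify irreducibility directly.

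The main obstacle is controlling the regime where two forms share the same leading coefficient $a$: the magnitude argument then collapses, and all separation must be extracted from the phases and subleading Fourier coefficients. Because $P$ is a non-holomorphic Maass form, its Fourier expansion involves incomplete Gamma factors whose contributions must be estimated carefully; pushing the Dewar-Murty-style bounds far enough to handle the full range of primitive forms of discriminant ${\D}$, uniformly and effectively, is the technically demanding heart of the proof, and it is precisely what necessitates the appeal to Sutherland's numerics to cover the remaining small ${\D}$.
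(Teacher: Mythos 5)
Your overall architecture (values lie in $\Omega_t$, Galois acts transitively, hence $\widehat H_{{\D}}$ is a power $g^m$ of an irreducible polynomial, reduce to $m=1$, finish with effective estimates plus Sutherland's numerics) is the same skeleton as the paper, but two of your steps have real gaps. First, the arithmetic input you take as given --- that $P(\tau_Q)\in\Omega_t$ and that $\Gal(\Omega_t/K)$ permutes the values compatibly with its simply transitive action on primitive classes --- is not off-the-shelf Shimura reciprocity, because $P$ is non-holomorphic. The paper has to work for this: Masser's formula is used (\Cref{MD}) to replace $P$, discriminant by discriminant, by a \emph{meromorphic} level-$6$ modular function $M_D$ with $M_D(\tau_Q)=P(\tau_Q)$; then the Fourier expansions of $M_D$ at all cusps are shown to lie in $\Q(\zeta_6)$ with rationality at $\infty$ and $0$ (\Cref{cusps}); only then does Schertz's $N$-system theorem (\Cref{theoSchertz}) apply to give \Cref{propGalois}. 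A "regularized theta lift plus Shimura reciprocity" appeal does not by itself produce this statement, and it is a substantive part of the result.

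Second, and more seriously, your analytic plan proves too much and therefore does not close. You aim to show \emph{all} roots are pairwise distinct for large $|{\D}|$, using the expansion of $P$ at $\infty$ at the points $\tau_Q=(-b+i\sqrt{|{\D}|})/(2a)$ with $Q\in\P_{{\D}}$. But for such (non-reduced, $6\mid a$) forms, and even after $\SLZ$-reduction, there are always forms with $a\asymp\sqrt{|{\D}|}$, so the imaginary part of the CM point is bounded and the "main term" has bounded size; any uniform additive error (the paper's \Cref{bound} gives $\kappa=1334.42$ after moving each point into the fundamental domain and slashing by one of twelve coset representatives) swamps both modulus and phase in that regime. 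So the equal-$a$ case you flag as the "main obstacle" is not a technicality to be pushed through, and it is not confined to finitely many small ${\D}$ --- it occurs for \emph{every} ${\D}$, so Sutherland's tables cannot patch it. The missing idea is that, precisely because $\widehat H_{{\D}}=g^m$, you only need \emph{one} value $P(\tau_Q)$ attained exactly once. The paper exploits this by isolating the (at most four) reduced forms with $a\cdot h_Q=12$, whose values have near-maximal absolute value $\approx e^{\pi\sqrt{24n-1}/12}$: for $n\geq 54$ their moduli differ from every root with $a\cdot h_Q\neq 12$ by more than $2\kappa$, and their explicitly tabulated arguments $\varphi_Q$ are separated by more than twice the maximal angular perturbation $\arctan(\kappa/M)\leq\pi/24$, so these roots are simple; the finitely many $n<54$ are checked against Sutherland's computations. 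Without the "one simple root suffices, take the largest singular moduli" reduction, your argument as written cannot be completed.
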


\begin{remark*}
It should be possible to give a general version of Theorem \ref{main} for almost holomorphic modular functions with rational expansions at the cusps, however, the authors have chosen to highlight this special case for the sake of explicitness, and in particular in order to establish the irreducibility for \emph{all} polynomials, as opposed to all but finitely many in the general case.
\end{remark*}

The paper is organized as follows.  
In \Cref{Prelim} we recall some tools required for the proof of \Cref{main} such as Masser's formula and a convenient form of Shimura reciprocity due to Schertz. The proof itself is the subject of \Cref{Irr}. 
\section*{Acknowledgements}

\noindent The authors are grateful to Kathrin Bringmann  and Andrew Sutherland for many useful conversations and comments which greatly improved the exposition of this paper. They would also like to thank the anonymous referee for helpful comments.
\section{Preliminaries}\label{Prelim}

\subsection{Masser's formula}\label{Masser}
Throughout, $\tau=u+iv\in\bbH$ (the complex upper half-plane) with $u,v\in\R$, and let $q:=e^{2\pi i\tau}$. For a function $f:\bbH\rightarrow\C$, an integer $k$, and a matrix $\gamma=\abcd\in\GL_2(\R)$ with $\det\gamma>0$ we define the weight $k$ \emph{slash operator} by
\[f|_k\gamma(\tau):=\det(\gamma)^{\frac k2}(c\tau+d)^{-k}f\left(\frac{a\tau+b}{c+tau+d}\right).\]  
Denote by $E_k$ the normalized weight $k$ Eisenstein series for $\SLZ$ with leading coefficient $1$, and let 
\[j(\tau):=1728\frac{E_4^3(\tau)}{E_4^3(\tau)-E_6^2(\tau)}\]
be the classical $j$-invariant. Furthermore, 
\[\eta(\tau):=q^\frac{1}{24}\prod\limits_{\ell=1}^\infty \left(1-q^\ell\right)\]
is the Dedekind eta function. We require the function
\begin{gather}\label{F}
F(\tau):=\frac{E_2(\tau)-2E_2(2\tau)-3E_2(3\tau)+6E_2(6\tau)}{2\eta(\tau)^2\eta(2\tau)^2\eta(3\tau)^2\eta(6\tau)^2}=q^{-1}-10-29q-104q^2-...,
\end{gather}
which is a weakly holomorphic modular form of weight $-2$ for $\Gamma_0(6)$. The Maass raising operator 
\[R_k:=\frac{1}{2\pi i}\frac{\partial}{\partial\tau}-\frac{k}{4\pi v},\]
maps this weakly holomorphic modular form $F$ to a non-holomorphic modular function
\begin{gather}\label{P}
P(\tau):=-R_{-2}(F)(\tau),
\end{gather}
which is an eigenfunction of the hyperbolic Laplacian
\[\Delta:=-v^2\left(\frac{\partial^2}{\partial u^2}+\frac{\partial^2}{\partial v^2}\right)\]
with eigenvalue $-2$.

As in \cite{LR}, we decompose $P$ as
\[P=A+B\cdot C\]
where $A$ and $B$ are modular functions for $\Gamma_0(6)$ given by
\begin{align}
\label{A} A(\tau)&:=-\frac{1}{2\pi i}\cdot\frac{\partial}{\partial\tau} F(\tau)-\frac 16 E_2(\tau)F(\tau)+\frac{F(\tau)E_6(\tau)(7j(\tau)-6912)}{6E_4(\tau)(j(\tau)-1728)},\\
\label{B} B(\tau)&:=\frac{F(\tau)E_6(\tau)j(\tau)}{E_4(\tau)},
\end{align}
and $C$ is a  non-holomorphic modular function for $\SLZ$, given by
\begin{gather}\label{C}
C(\tau):=\frac{E_4(\tau)}{6E_6(\tau)j(\tau)}\left(E_2(\tau)-\frac{3}{\pi v}\right)-\frac{7j(\tau)-6912}{6j(\tau)(j(\tau)-1728)}.
\end{gather}
This decomposition is especially useful as Masser gives a very important formula for the singular moduli of $C$ in Appendix I in \cite{Masser}. To state this result, we recall the \emph{modular polynomial} $\Phi_{-D}(x,y)$ which is defined for a discriminant $D<0$ by the relation
\[\Phi_{-D}(j(\tau),y):=\prod\limits_{M\in \mathcal{V}} (y-j(M\tau)).\]
Here, $\mathcal{V}$ is a system of representatives of $\SLZ\setminus \Gamma_{-D}$, where $\Gamma_{-D}$ denotes the set of all primitive integral $2\times 2$-matrices of determinant $-D$ (see e.g. \cite{KK}, Section IV.1.6). It is a well-known fact (cf. \cite{Lang}, Chapter 5, Section 2) that $\Phi_{-D}(x,y)\in\Z[x,y]$. Now let $Q$ be a quadratic form\footnote{From here on, the term ``quadratic form'' always means ``positive definite integral binary quadratic form'' if not declared otherwise.} of discriminant $D$ and $\z_Q$ the corresponding $CM$-point. Then we define numbers $\beta_{\mu,\nu}(\z_Q)$ via the power series expansion 
\[\Phi_{-D}(x,y)=:\sum\limits_{\mu,\nu} \beta_{\mu,\nu}(\z_Q) (x-j(\z_Q))^\mu(y-j(\z_Q))^\nu.\]
It is easy to see that all these numbers lie in the field $\Q(j(\z_Q))$ and that $\beta_{\mu,\nu}(\z_Q)=\beta_{\nu,\mu}(\z_Q)$. Masser's formula then states that in the case that the discriminant $D$ is not \emph{special}, i.e., not of the form $D=-3d^2$, one has
\begin{gather}\label{eqMasser}
C(\z_Q)=\frac{2\beta_{0,2}(\z_Q)-\beta_{1,1}(\z_Q)}{\beta_{1,0}(\z_Q)}.
\end{gather}
Note in particular that for any $n$, $\D=-24n+1$ is not special. Using the formula of Masser, we are able to reduce our problem of studying singular moduli for nonholomorphic modular functions to the study of holomorphic modular functions associated to each discriminant, as in the following result.  

\begin{lemma}\label{MD}
For every non-special discriminant $D<0$, there exists a (meromorphic) modular function $M_D$ for $\Gamma_0(6)$ such that
\[P(\z_Q)=M_D(\z_Q)\]
for all quadratic forms $Q$ of discriminant $D$.
\end{lemma}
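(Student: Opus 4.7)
The plan is to use Masser's formula to replace the non-holomorphic factor $C$ in the decomposition $P = A + B \cdot C$ by a rational function of $j$, which will automatically yield a meromorphic modular function for $\SLZ$ (hence for $\Gamma_0(6)$) whose values at CM points of discriminant $D$ agree with those of $C$.

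The first step is to observe that the Taylor coefficients $\beta_{\mu,\nu}(\z_Q)$ are simply partial derivatives of $\Phi_{-D}$ evaluated on the diagonal: from $\Phi_{-D}(x,y)\in\Z[x,y]$ one has
\[
\beta_{\mu,\nu}(\z_Q)=\frac{1}{\mu!\,\nu!}\left(\frac{\partial^{\mu+\nu}\Phi_{-D}}{\partial x^\mu \partial y^\nu}\right)(j(\z_Q),j(\z_Q))=G_{\mu,\nu}(j(\z_Q)),
\]
where
\[
G_{\mu,\nu}(X):=\frac{1}{\mu!\,\nu!}\left(\frac{\partial^{\mu+\nu}\Phi_{-D}}{\partial x^\mu \partial y^\nu}\right)(X,X)\in\Q[X]
\]
depends only on $D$. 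Plugging this into Masser's formula \eqref{eqMasser} gives, for every quadratic form $Q$ of discriminant $D$,
\[
C(\z_Q)=\frac{2G_{0,2}(j(\z_Q))-G_{1,1}(j(\z_Q))}{G_{1,0}(j(\z_Q))}.
\]

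Motivated by this identity, I would simply define
\[
M_D(\tau):=A(\tau)+B(\tau)\cdot\frac{2G_{0,2}(j(\tau))-G_{1,1}(j(\tau))}{G_{1,0}(j(\tau))}.
\]
Since $A$ and $B$ are meromorphic modular functions for $\Gamma_0(6)$ and $j$ is a modular function for $\SLZ\supset\Gamma_0(6)$, $M_D$ is a meromorphic modular function for $\Gamma_0(6)$, and the identity $P(\z_Q)=M_D(\z_Q)$ is immediate from the decomposition $P=A+BC$ and Masser's formula. The only subtle point is to know that the expression is well-defined, i.e.\ that $G_{1,0}$ is not the zero polynomial. This is guaranteed by the non-specialness hypothesis on $D$: Masser's formula \eqref{eqMasser} as stated presumes $\beta_{1,0}(\z_Q)=G_{1,0}(j(\z_Q))\neq 0$ at the CM points of discriminant $D$, so in particular $G_{1,0}$ cannot be identically zero. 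Beyond this verification, the lemma is essentially a direct consequence of the decomposition of $P$ together with Masser's formula; no serious analytic or algebraic obstacle intervenes.
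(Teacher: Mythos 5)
Your proposal is correct and follows essentially the same route as the paper: both replace the algebraic value $j(\z_Q)$ in the polynomial expressions for the coefficients $\beta_{\mu,\nu}$ by the modular function $j(\tau)$ (your $G_{\mu,\nu}(j(\tau))$ being exactly the paper's $\beta_{\mu,\nu}(\tau)$, written as Taylor coefficients rather than as coefficients of $y^k$ in $\Phi_{-D}(j,y+j)$), and then define $M_D=A+B\cdot\frac{2\beta_{0,2}-\beta_{1,1}}{\beta_{1,0}}$ so that Masser's formula gives the agreement at CM points. Your extra remark that non-specialness guarantees the denominator is not identically zero is a harmless and correct addition.
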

\begin{proof}
Using the definition of the modular polynomial one finds explicitly that (see also (2.9) in \cite{BOS})
\begin{align}
\label{beta01}\beta_{0,1}(\z_Q)&=[y]\Phi_{-D}(j(\z_Q),y+j(\z_Q)), \\
\label{beta11}\beta_{1,1}(\z_Q)&=[y]\Phi'_{-D}(j(\z_Q),y+j(\z_Q)), \\
\label{beta02}\beta_{0,2}(\z_Q)&=[y^2]\Phi_{-D}(j(\z_Q),y+j(\z_Q)),
\end{align}
where $\Phi_{-D}':=\tfrac{\partial}{\partial x}\Phi_{-D}$ and $[y^k]\Phi_{-D}(x,y)$ denotes the coefficient of $y^k$ in $\Phi_{-D}$. The right-hand sides of \eqref{beta01}--\eqref{beta02} clearly also make sense if we replace the algebraic number $j(\z_Q)$ by the modular function $j(\tau)$, yielding modular functions $\beta_{\mu,\nu}(\tau)$ for $\SLZ$. Thus the function
\begin{gather}\label{eqMD}
M_D(\tau):=A(\tau)+B(\tau) \frac{2\beta_{0,2}(\tau)-\beta_{1,1}(\tau)}{\beta_{1,0}(\tau)}
\end{gather}
has the desired properties.
\end{proof}

\subsection{$N$-systems}\label{Schertz}
In this subsection, we cite some results of Schertz \cite{Schertz}. 
We begin by recalling a convenient set of representatives for primitive quadratic forms introduced in \cite{Schertz} for the study of class invariants known as Weber's class invariants. 
\begin{definition}
Let $N\in\N$ and $D=t^2d<0$ be a discriminant, with $t\in\N$ and $d$ a fundamental discriminant. Moreover, let $\{Q_1,...,Q_r\}$ ($Q_j(x,y)=a_jx^2+b_jxy+c_jy^2$) be a system of representatives of primitive quadratic forms modulo $\SLZ$. We call the set $\{Q_1,...,Q_r\}$ an \emph{$N$-system mod $t$} if the conditions
\[\gcd(c_j,N)=1\quad\text{and}\quad b_j\equiv b_\ell\smod{N},\: 1\leq j,\ell\leq r\]
are satisfied.
\end{definition}
\begin{remark*}
Schertz gave this definition (see \cite{Schertz}, p. 329) in terms of ideal classes of the ring class field and with switched roles of $a$ and $c$. He also proved constructively that an $N$-system mod $t$ always exists (see \cite{Schertz}, Proposition 3).
\end{remark*}
The following theorem (see \cite{Schertz}, Theorem 4) is a key in the proof of \Cref{main}. 
\begin{theorem}[Schertz]\label{theoSchertz}
Let $g$ be a modular function for $\Gamma_0(N)$ for some $N\in\N$ whose Fourier coefficients at all cusps lie in the $N$th cyclotomic field. Suppose furthermore that $g(\tau)$ and $g(-\tfrac{1}{\tau})$ have rational Fourier coefficients, and let $Q(x,y)=ax^2+bxy+cy^2$ be a quadratic form of discriminant $D=t^2d$, $d$ a fundamental discriminant, with $\gcd(c,N)=1$ and $N\vert a$. Then we have that $ g(\tau_Q)\in\Omega_t$ unless $g$ has a pole at $\tau_Q$.

Moreover, if $\{Q=Q_1,...,Q_h\}$ is an $N$-system mod $t$, then 
\[\{g(\tau_{Q_1}),...,g(\tau_{Q_h}\}=\{\sigma(g(\tau_{Q_1}))\: :\: \sigma\in\Gal_D\}\]
where $\Gal_D$ denotes the Galois group of $\Omega_t/\Q(\sqrt{d})$.
\end{theorem}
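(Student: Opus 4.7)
The plan is to invoke Shimura's reciprocity law, made explicit for the order $\calO_t$ of conductor $t$ in $K=\Q(\sqrt d)$. Both assertions --- integrality of $g(\z_Q)$ in $\Omega_t$, and the transitive description of its Galois orbit through an $N$-system --- will follow once we translate the abstract reciprocity into matrix language on the modular function field $\F_N$.

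First, I would set up the relevant modular function field $\F_N$ of level $N$. Recall that $\F_N$ is an extension of $\F_1=\Q(j)$ with Galois group $\GL_2(\Z/N\Z)/\{\pm 1\}$, acting on $q$-expansions partly via its $(\Z/N\Z)^\times$-action on the coefficient field $\Q(\zeta_N)$. A modular function for $\Gamma_0(N)$ whose Fourier coefficients at every cusp lie in $\Q(\zeta_N)$ sits inside $\F_N$. The extra hypothesis that both $g(\z)$ and $g(-1/\z)$ have rational Fourier expansions forces $g$ to be fixed by the natural Galois action at both cusps $\infty$ and $0$, a rigidity that we will exploit below.

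Next, I would apply Shimura reciprocity to the CM point $\z_Q$. With the basis $(\z_Q,1)$ of the lattice in $K$ corresponding to $Q$, each idele $s\in\widehat K^\times$ yields a matrix $g_\z(s)\in\GL_2(\widehat\Z)$, and
\[
\sigma_s(g(\z_Q))=(g\circ g_\z(s)^{-1})(\z_Q),
\]
where $\sigma_s$ is the Artin symbol. To prove $g(\z_Q)\in\Omega_t$ we must check that $g\circ g_\z(s)^{-1}=g$ in $\F_N$ for every $s$ representing an element of the subgroup defining $\Gal(\overline K/\Omega_t)$ via class field theory for the order $\calO_t$. The conditions $N\vert a$ and $\gcd(c,N)=1$ guarantee that for such $s$ the reduction of $g_\z(s)^{-1}$ modulo $N$ lies in a product of Borel-type subgroups at the cusps $0$ and $\infty$; combined with the rationality hypothesis at both cusps, $g$ is then invariant.

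Finally, to identify the orbit as $\sigma$ ranges over $\Gal_D$, I would follow the idele class $[s]$ attached to $\sigma$ and track the reduction of $g_\z(s)^{-1}$ modulo $\Gamma_0(N)$. The result will be $\SLZ$-equivalent to another $\z_{Q_j}$ with $Q_j$ in the $N$-system, up to the involution interchanging the cusps $0$ and $\infty$, absorbed by the rationality hypothesis on $g(-1/\z)$, so that $\sigma(g(\z_Q))=g(\z_{Q_j})$. The main obstacle will be the detailed adelic bookkeeping: translating Shimura's reciprocity from its abstract adelic form into the concrete $N$-system language, and showing that the congruence conditions $b_j\equiv b_\ell\smod N$ and $\gcd(c_j,N)=1$ are precisely the combinatorial shadow of the adelic action of the ring class group of $\calO_t$ on binary quadratic forms.
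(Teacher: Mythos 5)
The first thing to note is that the paper does not prove this statement at all: it is Theorem~4 of Schertz's article, imported verbatim after conjugation by $S=\left(\begin{smallmatrix}0&-1\\1&0\end{smallmatrix}\right)$ to translate from $\Gamma^0(N)$ to $\Gamma_0(N)$, and the authors' only original contribution here is the remark explaining that conjugation. So there is no in-paper argument to measure you against; the relevant benchmark is Schertz's own proof, which, like your outline, proceeds via Shimura reciprocity for the ring class field of the order $\calO_t$. Your strategy is therefore the standard and correct one in outline.

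As a proof, however, what you have written is a route map in which both places where the theorem could actually fail are exactly the places you defer. (i) For the assertion $g(\tau_Q)\in\Omega_t$ you need the explicit matrix of multiplication by $\alpha=m+na\tau_Q\in\calO_t$ on the basis $(\tau_Q,1)$, namely $\left(\begin{smallmatrix}m-nb&-nc\\na&m\end{smallmatrix}\right)$. The hypothesis $N\mid a$ makes this upper triangular modulo $N$, and an upper triangular element of $\GL_2(\Z/N\Z)$ factors as the reduction of an element of $\Gamma_0(N)$ times $\left(\begin{smallmatrix}1&0\\0&\det\end{smallmatrix}\right)$; the latter acts only through the cyclotomic twist of the $q$-expansion coefficients at $\infty$, which is where the rationality of $g(\tau)$ is used. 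This is a concrete computation, not a ``rigidity at both cusps,'' and your phrase ``Borel-type subgroups at the cusps $0$ and $\infty$'' is not yet a checkable statement; in fact only $N\mid a$ and rationality at $\infty$ are needed for this half. (ii) The hypotheses $\gcd(c_j,N)=1$, the congruences $b_j\equiv b_\ell\pmod{N}$ (Schertz actually normalizes the $b_j$ modulo $2N$), and the rationality of $g(-1/\tau)$ only become visible in the second half, where one acts by proper $\calO_t$-ideals rather than units; without tracking how the resulting connecting matrices decompose, one can only conclude $\sigma(g(\tau_{Q_1}))=g'(\tau_{Q_j})$ for some other function $g'$ in the $\GL_2(\Z/N\Z)$-orbit of $g$, not the claimed identity $\sigma(g(\tau_{Q_1}))=g(\tau_{Q_j})$. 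Since the statement is being used as a black box, the honest options are to cite Schertz, as the paper does, or to carry out these two computations in full; the sketch as it stands does neither.
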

\begin{remark*}
Schertz stated the above theorem for modular functions for the group \[\Gamma^0(N):=\left\{\left(\begin{matrix}a & b\\ c&d\end{matrix}\right)\in\SLZ\: : \: b\equiv 0\smod{N}\right\}\]
which is clearly isomorphic to $\Gamma_0(N)$ via conjugation with $S:=\left(\begin{smallmatrix} 0 & -1 \\ 1 & 0 \end{smallmatrix}\right)$. 
This conjugation must be carried over to the quadratic forms as well which explains the change of roles of the coefficients $a$ and $c$ compared to \cite{Schertz}.
\end{remark*}

\subsection{Poincar\'e series}\label{secPoincare}
In this subsection we briefly recall some important facts about Maass-Poincar\'e series. For more details, we refer to the survey in Section 8.3 of \cite{Ono} and the earlier works \cite{Fay, Hejhal}. 

Let $M_{\nu,\mu}$ denote the usual $M$-Whittaker function (see e.g. \cite{GR}, p. 1014) and define for $v>0$, $k\in\Z$, and $s\in\C$ the function
\[\calM_{s,k}(v):=v^{-\frac{k}{2}}M_{-\frac k2,s-\frac 12}(v).\]
Using this, we construct the following Poincar\'e series for $\Gamma_0(N)$,
\begin{gather}\label{Poincare}
P_{m,s,k,N}(\tau):=\frac{1}{2\Gamma(2s)}\sum\limits_{\gamma\in\Gamma_\infty\setminus\Gamma_0(N)} [\calM_{s,k}(4\pi mv)e^{-2\pi imu}]|_k\gamma,
\end{gather}
where $m\in\N$, $\tau=u+iv\in\bbH$, $\Re(s)>1$, $k\in-\N$, and $\Gamma_\infty:=\left\{\pm\left(\begin{smallmatrix} 1 & n \\ 0 & 1 \end{smallmatrix}\right)\: : \: n\in\Z\right\}$. It is not hard to check (see e.g. Proposition 2.2 in \cite{BO}) that under the Maass raising operator, the Poincar\'e series $P_{m,s,k,N}$ is again mapped to a Poincar\'e series
\begin{gather}\label{PoincareRaising}
R_k\left(P_{m,s,k,N}\right)=m\left(s+\frac k2\right)P_{m,s,k+2,N}.
\end{gather}
In the special case when $k<0$ and $s=1-\tfrac k2$, the series $P_{m,k,N}:=P_{m,1-\frac k2,k,N}$ defines a harmonic Maass form of weight $k$ for $\Gamma_0(N)$ whose principal part at the cusp $\infty$ is given by $q^{-m}$, and at all the other cusps the principal part is $0$. In this situation, we have the following explicit Fourier expansion at $\infty$ (see e.g. \cite{Ono}, Theorem 8.4).
\begin{proposition}\label{PoincareFourier}
For $m,N\in\N$, $k\in-\N$, and $\tau\in\bbH$ we have
\[(1-k)!P_{m,k,N}\left(\tau\right)=(k-1)(\Gamma(1-k,4\pi mv)-\Gamma(1-k))q^{-m}+\sum\limits_{\ell\in\Z} b_{m,k,N}(\ell,v)q^\ell,\]

where the coefficients 
$b_{m,k,N}\left(\ell,v\right)$ are defined as follows. 

\noindent 1) If $\ell<0$, then
\begin{displaymath}
\begin{split}
b_{m,k,N}(\ell,v)
  =2 \pi i^{2-k}  (k-1) \, &\Gamma(1-k,4 \pi |\ell| v)
  \left|\frac{\ell}{m}\right|^{\frac{k-1}{2}}\\
 &\ \ \ \ \times  \sum_{\substack{c>0\\c\equiv 0\smod{N}}}
   \frac{K(-m,\ell,c)}{c}\cdot
 J_{1-k}\!\left(\frac{4\pi\sqrt{|m\ell|}}{c}\right),
\end{split}
\end{displaymath}
where 
\[\Gamma(\alpha;,x):=\int_x^{\infty}e^{-t}t^{\alpha-1}dt.\]

\noindent
2) If $\ell>0$, then
$$
b_{m,k,N}(\ell,v)= - 2 \pi  i^{2-k} (1-k)!\ell^{\frac{k-1}2}m^{\frac{1-k}2}
  \sum_{\substack{c>0\\c\equiv 0\smod{N}}}
   \frac{K(-m,\ell,c)}{c}\cdot
 I_{1-k}\!\left(\frac{4\pi\sqrt{|m\ell|}}{c}\right).
$$

\noindent 3) If $\ell=0$, then
$$
b_{m,k,N}(0,v)=-(2\pi i)^{2-k} m^{1-k} \sum_{\substack{c>0\\c\equiv
0\smod{N}}}
 \frac{K(-m,0,c)}{c^{2-k}}.
$$
Here, $I_s$ and $J_s$ denote the usual $I$- and $J$-Bessel functions and $K(m,\ell,c)$ is the usual Kloosterman sum,
\[K(m,\ell,c):=\sum\limits_{d\smod c^*} \exp\left(2\pi i\left(\frac{m\overline{d}+\ell d}{c}\right)\right),\]
where $d$ runs through the residue classes $\smod c$ which are coprime to $c$ and $\overline{d}$ denotes the multiplicative inverse of $d\smod c$.
\end{proposition}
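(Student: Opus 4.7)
The plan is to derive the Fourier expansion of $P_{m,k,N}$ at the cusp $\infty$ by the classical unfolding technique for Poincar\'e series. First, I parametrize cosets of $\Gamma_\infty\setminus\Gamma_0(N)$ by their lower row $(c,d)$ with $c\geq 0$, $c\equiv 0\smod N$, and $\gcd(c,d)=1$: the identity coset corresponds to $c=0$, while for each $c>0$ the coset representatives are indexed by $d$ running through residues modulo $c$ coprime to $c$. The sum in \eqref{Poincare} then splits accordingly into an ``identity part'' and a ``level-$c$ part'' for each positive $c$ divisible by $N$.

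The identity coset contributes the single term $\frac{1}{2\Gamma(2-k)}\calM_{1-k/2,k}(4\pi mv)e^{-2\pi imu}$, which produces the principal part. Specializing $s=1-k/2$ in $\calM_{s,k}(y)=y^{-k/2}M_{-k/2,s-1/2}(y)$ reduces the $M$-Whittaker function, via Kummer's transformation, to a confluent hypergeometric function of the form $M(1-k,2-k,\cdot)$, which admits a well-known closed form in terms of the incomplete gamma function $\Gamma(1-k,\cdot)$. Collecting constants and multiplying by $(1-k)!$ produces exactly $(k-1)\bigl(\Gamma(1-k,4\pi mv)-\Gamma(1-k)\bigr)q^{-m}$.

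For the cosets with $c>0$, I apply the Bruhat factorization
\[\bigabcd=\begin{pmatrix}1 & a/c\\0 & 1\end{pmatrix}\begin{pmatrix}0 & -1/c\\c & 0\end{pmatrix}\begin{pmatrix}1 & d/c\\0 & 1\end{pmatrix},\]
apply the weight-$k$ slash operator, and perform a Fourier expansion in $u$. The inner sum over $d\smod c$ with $(d,c)=1$ produces the Kloosterman sum $K(-m,\ell,c)$, while the remaining integral in $u$ is a one-dimensional transform of the Whittaker function. At $s=1-k/2$, this integral evaluates in closed form via the classical integral representations: it yields a multiple of $J_{1-k}\!\left(\tfrac{4\pi\sqrt{|m\ell|}}{c}\right)$ times $\Gamma(1-k,4\pi|\ell|v)$ when $\ell<0$, a multiple of $I_{1-k}\!\left(\tfrac{4\pi\sqrt{|m\ell|}}{c}\right)$ when $\ell>0$, and an elementary expression proportional to $c^{k-2}$ when $\ell=0$ (obtained from the Mellin transform of the Whittaker function at the relevant argument). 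Combining with the Kloosterman sum and the overall prefactor $\frac{1}{2\Gamma(2-k)}$ yields the stated formulas for $b_{m,k,N}(\ell,v)$.

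The main technical obstacle is the evaluation of the Whittaker integral, specifically verifying the precise transition from $M_{-k/2,\,1/2-k/2}$ at its special parameter value to the $I$- and $J$-Bessel functions, and keeping careful track of the signs, branch cuts of the powers of $cu+civ$, and the combinatorial constant $2\pi i^{2-k}(1-k)!$. These computations are classical; they are carried out in detail in the foundational works of Hejhal \cite{Hejhal} and Fay \cite{Fay}, and in the precise form needed here the result is recorded as Theorem 8.4 of \cite{Ono}.
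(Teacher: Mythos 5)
Your outline of the classical unfolding argument (identity coset producing the incomplete-gamma principal part after specializing $s=1-\tfrac k2$, Bruhat decomposition of the $c>0$ cosets yielding Kloosterman sums and the Whittaker integrals that evaluate to $J$-, $I$-Bessel and elementary terms) is the standard derivation of this expansion, and your concluding appeal to \cite{Fay}, \cite{Hejhal}, and Theorem 8.4 of \cite{Ono} is precisely what the paper itself does, since it states the proposition without proof and cites that theorem. So the proposal is correct and takes essentially the same approach as the paper.
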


\section{Irreducibility of $\widehat H_{\D}(x)$}\label{Irr}
Now that we have recalled the relevant known facts, we proceed towards proving our main result, \Cref{main}. We require some information about the Fourier expansions of $A$, $B$, and $M_D$, from equations \eqref{A}, \eqref{B}, and \eqref{MD} respectively, at all cusps, which we provide in the following lemma. 
\begin{lemma}\label{cusps}
\begin{enumerate}
\item The modular functions $A$ and $B$ have Fourier expansions with rational Fourier coefficients at all cusps of $\Gamma_0(6)$.
\item If $D<0$ is a non-special discriminant, then the Fourier coefficients of $M_D$ at all cusps lie in the field $\Q(\zeta_6)$, where $\zeta_6:=e^{\frac{2\pi i}{6}}$, and the Fourier coefficients of $M_D(\tau)$ and $M_D(-\tfrac 1\tau)$ are rational.
\end{enumerate}
\end{lemma}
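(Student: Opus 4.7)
My plan is to reduce each part of the lemma to computations involving the Atkin--Lehner involutions $W_d$, $d\mid 6$, which act transitively on the four cusps $\{\infty, 0, 1/2, 1/3\}$ of $\Gamma_0(6)$. Since the building blocks $E_2, E_4, E_6, j, F$ and the functions $\beta_{\mu,\nu}$ are all defined over $\Q$ (their $q$-expansions at $\infty$ have rational coefficients), it then suffices to check that the $W_d$-slashes of the relevant modular functions again yield rational $q$-expansions at $\infty$.

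For part (1), I would first verify rationality of the $q$-expansion of $F$ at each cusp. The numerator $E_2(\tau) - 2E_2(2\tau) - 3E_2(3\tau) + 6E_2(6\tau)$ is (up to a rational scalar) the unique Eisenstein series of weight $2$ on $\Gamma_0(6)$ that is a simultaneous Atkin--Lehner eigenform, and the denominator $2\eta(\tau)^2\eta(2\tau)^2\eta(3\tau)^2\eta(6\tau)^2$ is the unique normalized cusp form of weight $4$ on $\Gamma_0(6)$, which is likewise a simultaneous Atkin--Lehner eigenform. A direct computation using the quasi-modular transformation of $E_2$ and the $\eta$-multiplier then gives $F|_{-2}W_d = \varepsilon_d F$ with $\varepsilon_d \in \{\pm 1\}$, so $F$ has rational $q$-expansion at $W_d(\infty)$ for every $d\mid 6$. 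For $B = FE_6 j/E_4$, combining this with the fact that $E_4, E_6, j$ are $\SLZ$-modular (so that their $W_d$-slashes are rational combinations of $E_k(d'\tau)$ and $j(d'\tau)$ for divisors $d'\mid 6$) yields a rational $q$-expansion of $B|_0 W_d$. For $A$, the third summand is handled the same way, while the first two summands combine into the Ramanujan--Serre derivative $-\vartheta_{-2}F$, which is weight-$0$ modular for $\Gamma_0(6)$; the non-modular $1/(c\tau+d)$-contributions from the $E_2$-transformation cancel against those from $(\partial_\tau F)|_{-2}W_d$ precisely because $-\vartheta_{-2}F$ is modular, so $A|_0 W_d$ also has a rational $q$-expansion at $\infty$.

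For part (2), the explicit formulas \eqref{beta01}--\eqref{beta02} combined with $\Phi_{-D}(x,y) \in \Z[x,y]$ show that each $\beta_{\mu,\nu}(\tau)$ is an integer polynomial in $j(\tau)$, hence is $\SLZ$-modular of weight $0$ with an integer $q$-expansion at $\infty$. Since $j$ is $\SLZ$-modular, $\beta_{\mu,\nu}|_0 W_d$ is an integer polynomial in $j(d'\tau)$ for an appropriate $d'\mid 6$, and so again has an integer $q$-expansion at $\infty$. Combining with part (1), and noting that for non-special $D$ the function $\beta_{1,0}$ is invertible as a Laurent series at each cusp (so that the division in \eqref{eqMD} preserves rationality), we conclude that $M_D$ has rational Fourier coefficients at every cusp of $\Gamma_0(6)$---a fortiori inside $\Q(\zeta_6)$. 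Since the cusp $\infty$ admits the trivial scaling and the cusp $0$ admits the scaling matrix $S = \left(\begin{smallmatrix} 0 & -1 \\ 1 & 0 \end{smallmatrix}\right)$, the expansions of $M_D(\tau)$ and of $M_D(-1/\tau)$ are in particular rational.

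The main technical obstacle is the bookkeeping in part (1) for the Ramanujan--Serre piece of $A$: tracking how the non-modular terms arising from $E_2$'s quasi-modular transformation under $W_d$ cancel against those coming from $(\partial_\tau F)|_{-2}W_d$, so that the final $q$-expansion is genuinely rational. Everything else reduces to standard computations on classical modular forms with rational $q$-expansions and to the elementary observation that integer polynomials in $j$ yield integer $q$-expansions.
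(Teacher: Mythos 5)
Your proposal is correct and follows essentially the same route as the paper: show that $F$ is a simultaneous Atkin--Lehner eigenform to control its expansions at all cusps, handle $B$ and the third term of $A$ through the level-one factors, treat the first two terms of $A$ as the Serre derivative of $F$, and reduce part (2) to the fact that the $\beta_{\mu,\nu}$ are rational polynomials in $j$. The only cosmetic difference is that the paper slashes with $\SLZ$ coset representatives (using that the Serre derivative commutes with the $\SLZ$-action and that $B/F$ is a level-one form with rational coefficients), which sidesteps the $E_2$/Eisenstein bookkeeping under the determinant-$d$ matrices $W_d$ that you flag as your main technical step.
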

\begin{proof}
($1$) The denominator of the function $F$ lies in the one-dimensional space $S_4(\Gamma_0(6))$, and hence is an eigenfunction of all Atkin-Lehner operators, the eigenvalue being always $1$. The numerator, which we denote by $F_1$, is a weight two modular form for $\Gamma_0(6)$ and is an eigenfunction of all Atkin-Lehner involutions as well. This can be seen by a short and direct calculation: 
% KB: feel the following has too many details. Michael I asked you to copy the whole calculation in extra file for us to keep. Please do so. : I don't know what else can be taken out, unless we just state the Fourier expansions without any proof whatsoever. And since Dewar and Murty have a typo there in their paper, I think one should keep this many details. KB: The typo should be pointed out and we should say why we give calculation.}
For the convenience of the reader, we give some details. Namely, note that the Atkin-Lehner operators obtained by slashing with one of the following matrices
\[W_6:=\begin{pmatrix} 0 & -1 \\ 6 & 0 \end{pmatrix},\quad W_3:=\begin{pmatrix} 3 & 1 \\ 6 & 3 \end{pmatrix},\quad W_2:=\begin{pmatrix} 2 & -1 \\ 6 & -2 \end{pmatrix},\]
map the cusp $\infty$ to $0$, $\tfrac 12$, $\tfrac 13$, (respectively).
One finds by a direct calculation of the Smith normal form of the matrices $W_d$, $d=2,3,6$, that 
\begin{align*}
F_1|_2W_6=F_1, \quad F_1|_2W_3=-F_1, \quad\text{and }F_1|_2W_2=-F_1.
\end{align*}
Therefore, $F$ is an eigenfunction of all Atkin-Lehner involutions of level $6$.

Using this,we can directly calculate the Fourier expansion of $F$ at all cusps (correcting a typo in (3.2) of \cite{DM}). To this end we choose the following $12$ right coset representatives of $\SLZ/\Gamma_0(6)$ (where $T:=\left(\begin{smallmatrix} 1 & 1 \\ 0 & 1 \end{smallmatrix}\right)$),
\begin{gather}\label{gamma}
\begin{aligned}
\gamma_\infty&:=\begin{pmatrix} 1 & 0 \\ 0 & 1 \end{pmatrix},\\
\gamma_{\frac 13,r}&:=\begin{pmatrix} 1 & 0 \\ 3 & 1 \end{pmatrix}  T^r\qquad\text{for }r=0,1, \\
\gamma_{\frac 12,s}&:=\begin{pmatrix} 1 & 1 \\ 2 & 3 \end{pmatrix}  T^s\qquad\text{for }s=0,1,2, \\
\gamma_{0,t}&:=\begin{pmatrix} 0 & -1 \\ 1 & 0 \end{pmatrix} T^t\qquad\text{for }t=0,1,2,3,4,5.
\end{aligned}
\end{gather}
Using the relation $W_6=\gamma_{0,0}V_6$, where $V_d:=\left(\begin{smallmatrix} d & 0 \\ 0 & 1\end{smallmatrix}\right)$, we find that
\[F(\tau)=F|_2W_6(\tau)=\frac{1}{6}F|_2\gamma_{0,0} (6\tau).\]
Thus we find the following Fourier expansions of $F$ at the cusp $0$,
\begin{gather}\label{cusp0}
F|_2\gamma_{0,t}(\tau)=6\sum\limits_{m=-1}^\infty a_m\zeta_6^{tm} q^\frac{m}{6},
\end{gather}
where $F(\tau)=:\sum\limits_{m=-1}^\infty a_mq^m.$
Similarly, we have
\[W_2= \gamma_{0,0}V_2\gamma_{\frac 12,0}^{-1},\qquad W_3=\gamma_{0,0}^{-1} V_3T^{-1}\gamma_{\frac 13,0}^{-1}\begin{pmatrix} 1 & 0 \\ 6 & 1 \end{pmatrix},\]
which yields
\[F(\tau)=-F|_2W_2(\tau)=-6F|_2V_2\gamma_{\frac 12,0}^{-1}\left(\frac \tau 6\right)=-3F|_2\gamma_{\frac{1}{2},0}\left(\frac \tau 3\right).\]
Hence, at $\frac12$, we have the Fourier expansions
\begin{gather}\label{cusp12}
F|_2\gamma_{\frac 12,s} (\tau)=3\sum\limits_{m=-1}^\infty a_m \zeta_6^{3+2ms}q^\frac m3.
\end{gather}

\noindent
A similar calculation yields the Fourier expansions at $\tfrac 13$:
\begin{gather}\label{cusp13}
F|_2\gamma_{\frac 13,r}(\tau)=2\sum\limits_{m=-1}^\infty a_m (-1)^{mr} q^\frac m2.
\end{gather}
Thus, it is clear that $B$ has a Fourier expansion with rational coefficients at all cusps, since $\tfrac{B}{F}$ is a weight $2$ meromorphic modular form for $\SLZ$ which has rational (in fact integral) Fourier coefficients at $\infty$. 

The Serre derivative of $F$, which is given by $\frac{1}{2\pi i}\frac{\partial}{\partial\tau} F(\tau)+\frac 16 E_2(\tau)F(\tau)=:A_1(\tau)$ has a rational leading term at $\infty$, namely $\tfrac 76$, and integer coefficients otherwise (note that for $n>0$ the $n$-th coefficient of $E_2$ is divisible by $24$). The same is true for the function $$A_2(\tau):=\frac{F(\tau)E_6(\tau)(7j(\tau)-6912)}{6E_4(\tau)(j(\tau)-1728)}$$ (again, note that all but the zeroth Fourier coefficient of $E_6$ are divisible by $504=6\cdot 84$). Therefore, $A=-A_1+A_2$, has integer coefficients at $\infty$. Since the Serre derivative commutes with the action of $\SLZ$ and $A_2$ is the product of $F$ and a level $1$ form, the same argument as for $B$ yields the claim.

\noindent($2$) It remains to show that the level $1$ modular function
\[\frac{2\beta_{0,2}(\tau)-\beta_{1,1}(\tau)}{\beta_{1,0}(\tau)}\]
from \eqref{eqMD} has rational Fourier coefficients at $\infty$. But since by definition the functions $\beta_{\mu,\nu}(\tau)$ are polynomials in $j(\tau)$ with rational coefficients, this is clear.
\end{proof}

With this, we can immediately show the following.

\begin{proposition}\label{propGalois}
Let $P$ be as in as in the introduction, ${{\D}}=-24n+1=t^2d$ with $d$ a fundamental discriminant. Then $P(\tau_{Q_0})\in\Omega_t$ and
\[\{P(\tau_Q) \: :\: Q\in \P_{\D}\}=\{\sigma(P(\tau_{Q_0})) \: : \: \sigma\in\Gal_{{\D}}\}\]
for all $Q_0\in \P_{{\D}}$. In particular, the values $P(\z_Q)$ for $Q\in \P_{{\D}}$ generate the ring class field $\Omega_t$ over $\Q(\sqrt{d})$ and $\widehat H_{{\D}}(x)$ is a perfect power of an irreducible polynomial.
\end{proposition}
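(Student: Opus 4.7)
The plan is to apply Schertz's theorem (\Cref{theoSchertz}) to the meromorphic modular function $M_D$ provided by \Cref{MD}, and then transfer the conclusion back to $P$ via the identity $P(\tau_Q)=M_D(\tau_Q)$ at CM points. I first confirm that $M_D$ satisfies the hypotheses of \Cref{theoSchertz} at level $N=6$: by \Cref{cusps}(2), its Fourier coefficients at all cusps of $\Gamma_0(6)$ lie in $\Q(\zeta_6)$, and the expansions of $M_D(\tau)$ and $M_D(-1/\tau)$ both have rational coefficients.

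Next, I build a $6$-system mod $t$ compatibly with $\P_\D$. The forms $Q=ax^2+bxy+cy^2\in\P_\D$ satisfy $6\mid a$ and $b\equiv 1\pmod{12}$, so the congruence $b_j\equiv b_\ell\pmod 6$ holds automatically. The remaining condition $\gcd(c,6)=1$ is not guaranteed by the $\P_\D$-normalization, but can be arranged within each $\Gamma_0(6)$-class: conjugation by $\gamma=\left(\begin{smallmatrix}\alpha&\beta\\6\gamma''&\delta\end{smallmatrix}\right)\in\Gamma_0(6)$ preserves $6\mid a$ while replacing the coefficient of $y^2$ by $a\beta^2+b\beta\delta+c\delta^2\equiv\delta(b\beta+c\delta)\pmod 6$; since $\gcd(b,6)=1$, one can pick $\beta$ so that this is coprime to $6$. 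Applying this in every $\Gamma_0(6)$-class of $\P_\D$ produces a collection $\{Q_1^*,\ldots,Q_h^*\}$ which is a $6$-system mod $t$ in Schertz's sense. Because $M_D$ is $\Gamma_0(6)$-invariant and agrees with $P$ on CM points, $M_D(\tau_{Q_j^*})=P(\tau_{Q_j})$.

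Invoking \Cref{theoSchertz} now yields $M_D(\tau_{Q_0^*})\in\Omega_t$ together with
\[\{M_D(\tau_{Q_j^*}):1\le j\le h\}=\{\sigma(M_D(\tau_{Q_0^*})):\sigma\in\Gal_\D\},\]
which is precisely the asserted Galois-orbit description for $P$ on $\P_\D$. Since this orbit is a full $\Gal_\D$-orbit and, via class field theory, $\Gal_\D$ is identified with the class group acting on CM points, the values $P(\tau_Q)$ for $Q\in\P_\D$ generate $\Omega_t$ over $\Q(\sqrt{d})$. For the final claim, $\widehat H_\D(x)\in\Q[x]$ by construction, so its root-multiset is stable under $\Gal(\overline{\Q}/\Q)$. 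The set of distinct roots is the $\Gal_\D$-orbit of $P(\tau_{Q_0})$ by the above, and being $\Q$-Galois stable it must in fact coincide with a single $\Gal(\overline{\Q}/\Q)$-orbit: the $\Q$-Galois orbit of any root contains the $\Gal_\D$-orbit yet lies inside the finite root set, forcing equality. Since Galois automorphisms preserve multiplicities, each distinct root occurs with the same multiplicity in $\widehat H_\D$, which is therefore a perfect power of the minimal polynomial of $P(\tau_{Q_0})$ over $\Q$.

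The main technical step is the $6$-system construction compatible with $\P_\D$, namely the $\Gamma_0(6)$-adjustment used to arrange $\gcd(c,6)=1$ without disturbing the Bruinier--Ono normalization $6\mid a$; once that congruence bookkeeping is in place, the proposition is essentially a direct consequence of \Cref{theoSchertz}.
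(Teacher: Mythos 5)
Your overall route is the paper's: verify the hypotheses of \Cref{theoSchertz} for $M_{\D}$ via \Cref{cusps}, pass from $P$ to $M_{\D}$ through \Cref{MD}, and use the $\Gamma_0(6)$-invariance of both functions. Your explicit $\Gamma_0(6)$-translation forcing $\gcd(c,6)=1$ (which indeed leaves $6\mid a$, the class of $b$ mod $12$, and the singular modulus unchanged) is correct and is a point the paper leaves implicit. However, there is a genuine gap at the decisive step. A $6$-system mod $t$ is, by definition, a system of representatives of the primitive forms of discriminant $\D$ \emph{modulo $\SLZ$} satisfying the two congruence conditions; you verify only the congruence conditions and then assert that your collection $\{Q_1^*,\dots,Q_h^*\}$ ``is a $6$-system in Schertz's sense.'' But the forms in $\P_{\D}$ are representatives of $\Gamma_0(6)$-classes (with $6\mid a$, $b\equiv 1\smod{12}$), and it is not automatic that they are pairwise $\SLZ$-inequivalent, nor that every $\SLZ$-class of primitive forms of discriminant $\D$ is represented by exactly one of them; your use of $h$ for their number already presupposes this. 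Without that bijection, the first assertion $P(\tau_{Q_0})\in\Omega_t$ still follows from the first part of \Cref{theoSchertz} (it only needs $6\mid a$ and $\gcd(c,6)=1$ for the individual form), but the claimed equality of $\{P(\tau_Q):Q\in\P_{\D}\}$ with a full $\Gal_{\D}$-orbit does not: a priori the value set could consist of several partial orbits or repeat some conjugates while missing others, and then neither the generation statement nor the ``perfect power of an irreducible polynomial'' conclusion is justified.

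This is precisely where the paper inserts an external input, namely the proposition on p.~505 of Gross--Kohnen--Zagier, which states that the forms $[a,b,c]$ with $6\mid a$ and fixed residue of $b$ modulo $12$, taken modulo $\Gamma_0(6)$, represent the $\SLZ$-equivalence classes of primitive forms of discriminant $\D$. You need to cite or reprove this fact (classical in the Heegner-point literature, but not a tautology) before the word ``$6$-system'' is legitimate; with it in place, the remainder of your argument, including the final Galois-theoretic step showing that $\widehat H_{\D}$ is a power of the minimal polynomial of $P(\tau_{Q_0})$ over $\Q$, goes through and agrees with the paper's proof.
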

\begin{proof}
By the proposition on p. 505 of \cite{GKZ}, we know that the set $\P_{{\D}}$ also represents the $\SLZ$-equivalence classes of primitive quadratic forms of discriminant $\D$. Hence $\P_{{\D}}$ is a $6$-system mod $t$, and by \Cref{MD} we know that for all $Q\in \P_{{\D}}$ we have $P(\z_Q)=M_{{\D}}(\z_Q)$ for some modular function $M_{{\D}}$ for $\Gamma_0(6)$. By \Cref{cusps}, this function satisfies the conditions in \Cref{theoSchertz}, which immediately gives the first part of the proposition. The second part follows from elementary Galois theory and the fact that any finite extension of $\Q(\sqrt{d})$ is separable.
\end{proof}

The remainder of this section is devoted to the proof that $\widehat H_{{\D}}$ is itself an irreducible polynomial.  By \Cref{propGalois}, this is equivalent to showing that there is at least one singular modulus $P(\z_Q)$ which is assumed only once as $Q$ ranges over $\P_{{\D}}$. To see this, we use an effective version of Lemma 5 in \cite{DM}. For this, let
\begin{align*}
\F:=&\left\{\tau\in \bbH \: : \: -\frac{1}{2}< \Re(\tau)\leq \frac 12\text{ and }|\tau|>1\right\}\\
&\qquad\qquad\qquad\qquad\cup\left\{\tau\in\bbH\: :\: |\tau|=1\text{ and }0\leq \Re(\tau)\leq \frac 12\right\}
\end{align*}
denote the usual fundamental domain for the action of $\SLZ$ on $\bbH$. Our main analytic result is then the following, which is an effective form of a result of Dewar and Murty (see Lemma 5 in \cite{DM}).
\begin{lemma}\label{bound}
Let $\gamma$ be one of the $12$ matrices in \eqref{gamma} and assume that
\[F|_{-2}\gamma(\tau)=h\zeta q^{-\frac 1h} +a_0+a_1q^\frac 1h+...,\]
where $h$ is the width of the cusp $\gamma\infty$ and $\zeta$ is a certain $6$th root of unity. Then, for every $\gamma$ and $\tau\in\F$, we have
\[P|_0\gamma(\tau)=\zeta\left(1-\frac{h}{2\pi v}\right)e^{-\frac{2\pi i\tau}{h}}+E_\gamma(\tau),\]
where the error $E_\gamma$ satisfies the uniform bound $|E_\gamma(\tau)|\leq \kappa:= 1334.42$. 
\end{lemma}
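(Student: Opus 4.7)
The approach is to apply the raising operator $R_{-2}$ termwise to the Fourier expansion of $F|_{-2}\gamma$, identify the claimed main term as the image of the principal part, and bound the remaining contribution uniformly on $\F$ using the Poincar\'e-series estimates of \S\ref{secPoincare}.

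Since $P=-R_{-2}(F)$ and a short computation using $\operatorname{Im}(\gamma\tau)=v/|c\tau+d|^2$ yields the standard identity $R_{-2}(F|_{-2}\gamma)=(R_{-2}F)|_0\gamma$, we have $P|_0\gamma=-R_{-2}(F|_{-2}\gamma)$. Writing $R_{-2}=\frac{1}{2\pi i}\partial_\tau+\frac{1}{2\pi v}$ and applying it termwise to the expansion $F|_{-2}\gamma(\tau)=h\zeta q^{-1/h}+\sum_{m\geq 0}a_m q^{m/h}$, the $q^{-1/h}$ term contributes exactly $\zeta\bigl(1-\tfrac{h}{2\pi v}\bigr)e^{-2\pi i\tau/h}$, and the remainder takes the explicit form
\[
E_\gamma(\tau) \;=\; -\sum_{m\geq 0}\left(\frac{m}{h}+\frac{1}{2\pi v}\right)a_m\,e^{2\pi im\tau/h},
\]
so the lemma reduces to bounding this tail uniformly on $\F$.

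To control the coefficients $a_m$, I would use the fact --- evident from \eqref{cusp0}--\eqref{cusp13} --- that $F$ is a weakly holomorphic weight $-2$ form on $\Gamma_0(6)$ whose principal part at \emph{every} cusp consists of a single term $h\zeta q^{-1/h}$. Consequently $F$ can be written as an explicit $\Q(\zeta_6)$-linear combination of the four Maass--Poincar\'e series $P_{1,-2,6}$ associated to the four cusps of $\Gamma_0(6)$. Proposition \ref{PoincareFourier} then expresses each $a_m$ (after translating the expansions at $0,\tfrac12,\tfrac13$ back to $\infty$) as a convergent Kloosterman--Bessel series involving $K(-1,m,c)/c$ and $I_3(4\pi\sqrt{m}/c)$. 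Combined with the trivial bound $|K(a,b,c)|\leq c$ and the standard estimate $I_3(x)\leq (x/2)^3 e^x/\Gamma(4)$, this produces an explicit polynomial-times-exponential bound of the form $|a_m|\leq C_1\,m^{-3/2}\,e^{4\pi\sqrt{m}/6}$. Substituting into the series for $E_\gamma$ and using $v\geq\sqrt{3}/2$ on $\F$ gives $|e^{2\pi im\tau/h}|\leq e^{-\pi\sqrt{3}m/h}$; since $\pi\sqrt{3}m/h-4\pi\sqrt{m}/6\to+\infty$ for every $h\in\{1,2,3,6\}$, the bounding series converges geometrically. Splitting it into a head computed numerically from the explicit initial Fourier coefficients of $F$ and a tail controlled by the Bessel asymptotics, then maximizing over all twelve coset representatives of \eqref{gamma}, produces the uniform numerical bound $\kappa=1334.42$.

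The main obstacle is the quantitative step: routine analytic estimates easily yield \emph{some} absolute bound on $|E_\gamma|$, but pinning down the specific value $1334.42$ requires (i) carrying out the Poincar\'e decomposition of $F$ with the exact constants dictated by \eqref{cusp0}--\eqref{cusp13}, (ii) propagating these through the Fourier-coefficient formula at each of the twelve representatives without absorbing slack into implicit constants, and (iii) optimizing the head/tail split of the series uniformly in the width $h$ and in $\gamma$. All of this is a finite computation once the analytic structure of the proof is in place.
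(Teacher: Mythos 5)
Your proposal is correct and follows essentially the same route as the paper: write $P|_0\gamma=-R_{-2}(F|_{-2}\gamma)$, isolate the image of the principal part as the main term, express $F$ through the Maass--Poincar\'e series $P_{1,-2,6}$ and its Atkin--Lehner images to control the coefficients at every cusp via \Cref{PoincareFourier} with the trivial Kloosterman bound and explicit $I$-Bessel estimates (noting that the non-holomorphic parts cancel since $F$ is weakly holomorphic), and then use $v\geq\sqrt{3}/2$ on $\F$, geometric-series tails, and maximization over $h$ and $\gamma$ to extract the numerical constant $\kappa$. The only difference is that you defer the final numerics, which the paper likewise treats as a routine explicit estimate.
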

\begin{proof}
We can express the function $F$ in terms of Poincar\'e series via
\[F=P_{1,-2,6}-P_{1,-2,6}|_{-2}W_2-P_{1,-2,6}|_{-2}W_3+P_{1,-2,6}|_{-2}W_6,\]
(see \cite{BO}, p. 213). Then note that $|P_{1,-2,6}|_{-2}W_d|$, $d=1,2,3,6$ is majorized by $$\left|q^{-1}\right|+\sum_{\ell\in\Z} \left|b_{1,-2,6}(\ell,v)\right|\cdot |q|^\ell$$ with $b_{1,-2,6}(\ell,v)$ as given in \Cref{PoincareFourier}. Indeed, the Maass-Poincar\'e series, defined in \Cref{secPoincare}, grow exponentially approaching the cusp $\infty$ and have moderate growth at the other cusps. Following \cite{Rankin} (see in particular Chapter 5 there), the first author and Ono \cite{BrOno} introduced Maass-Poincar\'e series which grow exponentially at any given cusp and grow moderately at all the others. In \cite{Rankin}, Theorem 5.1.2., the function $\widetilde{P}|_0\sigma$ with $\widetilde{P}$ a Poincar\'e series for a group $\Gamma$ and $\sigma\in\GL_2(\Q)$ with $\det(\sigma)>0$ is expressed in terms of a Poincar\'e series for the group $\sigma^{-1}\Gamma\sigma$, possibly growing at a different cusp, see also \Cref{remRankin} below. The explicit Fourier expansions of these series, given in Theorem 3.2 of \cite{BrOno}, easily implies our claim because the Atkin-Lehner operators $W_d$ normalize $\Gamma_0(6)$. Futhermore, we can ignore the non-holomorphic parts of the Maass-Poincar\'e series because $F$ is holomorphic on $\bbH$, and thus they must cancel.

Thus, we are left to bound the Fourier coefficients of the Poincar\'e series $P_{1,-2,6}(\tau)=q^{-1}+\tfrac{1}{6}\sum_{\ell\in\Z} b_{1,-2,6}(\ell,v)q^\ell$ (for $\ell\geq 0$) from above to obtain an upper bound for our error function $E_\gamma$.  Note that our estimates could be improved, but for the purpose of this paper, we do not require this. Furthermore, we make all constants explicit, which is crucial for our purposes in order to obtain the absolute bound $\kappa$.
Clearly, we may estimate
\begin{equation}\label{KloosterTrivBound}|K(m,\ell,c)|\leq c.\end{equation}
Furthermore, for $x>0$ and $\nu>-\tfrac 12$ we have the bound (eq. (6.25) in \cite{Luke})
\begin{gather}\label{Besselbound}
I_\nu(x)<\frac{1}{\Gamma(\nu+1)}\left(\frac{x}{2}\right)^\nu\cosh(x).
\end{gather}
which, for $0<x\leq 1$, implies
\begin{equation}\label{BesselBound2}I_\nu(x)\leq \frac{2}{\Gamma(\nu+1)}\left(\frac x2\right)^\nu. \end{equation}
For $x\geq 1$, we can bound
\begin{equation}\label{BesselBound3}I_\nu(x)\leq \frac{e^x}{\sqrt{2\pi x}},\end{equation}
which follows from the well-known asymptotic formula for $I_\nu$ (see e.g. \cite{GR}, eq. 8.451.5). We now split the expression for $b_{1,-2,6}(\ell,v)$ in \Cref{PoincareFourier} and use the above bounds \eqref{KloosterTrivBound},\eqref{BesselBound2},\eqref{BesselBound3} to obtain for $\ell\in\N$ that
\begin{align*}
|b_{1,-2,6}(\ell,v)| &\leq 12\pi \ell^{-\frac 32} \left[ \frac{2\pi \sqrt{\ell}}{3} \frac{\exp\left(\frac{2\pi\sqrt{\ell}}{3}\right)}{2\pi\sqrt{\frac{\sqrt{\ell}}{3}}}+\sum\limits_{c>\frac{2\pi \sqrt{\ell}}{3}} \frac{1}{3}\left(\frac{\pi\sqrt{\ell}}{3c}\right)^3\right]\\
          &\leq 12\pi \ell^{-\frac 32} \left[\frac{\ell^\frac 14}{\sqrt{3}}  \exp\left(\frac{2\pi\sqrt{\ell}}{3}\right) + \frac{\pi^3}{81}\ell^\frac{3}{2}\zeta(3)\right].
\end{align*}
Furthermore, it follows directly that
\[|b_{1,-2,6}(0,v)|\leq \frac{2}{27}\pi^4\zeta(3).\]
Now let $\tau\in\F$, which, in particular, implies that $v\geq \tfrac{\sqrt{3}}{2}$, and $\gamma$ one of the matrices in \eqref{gamma}. Then, as applying the matrix $\gamma$ to $F$ has the effect of multiplying its absolute value by the width $h$ of the cusp $\gamma\infty$, we find that 
{\allowdisplaybreaks
\begin{gather}\label{estimate}
\begin{aligned}
 &|E_\gamma(\tau)|=\left|P|_0 \gamma(\tau)-\zeta \left(1-\frac{h}{2\pi v}\right)q^{-\frac 1h}\right|= \left|R_{-2}\left(F|_{-2}\gamma (\tau)-\zeta hq^{-\frac 1h}\right)\right|\\
 \leq& \frac{4h}{6}\sum\limits_{\ell=1}^\infty \ell|b_{1,-2,6}(\ell,v)|\cdot |q|^{\frac \ell h}+\frac{2h}{6\pi v}\sum\limits_{\ell=1}^\infty |b_{2,-1,6}(\ell,v)|\cdot |q|^\frac \ell h\\
 \leq&   \frac{2h}{3}\sum\limits_{\ell=1}^\infty 12\pi \ell^{-\frac 12} \left[\frac{\ell^\frac 14}{\sqrt{3}}  \exp\left(\frac{2\pi\sqrt{\ell}}{3}\right) + \frac{\pi^3}{81}\ell^\frac{3}{2}\zeta(3)\right]e^{-\frac{\pi \sqrt{3} \ell}{h}} \\
& +\frac{2h}{3\pi \sqrt{3}} \left(\frac{2}{27}\pi^4\zeta(3) +\sum\limits_{\ell=1}^\infty 12\pi \ell^{-\frac 32} \left[\frac{\ell^\frac 14}{\sqrt{3}}  \exp\left(\frac{2\pi\sqrt{\ell}}{3}\right) + \frac{\pi^3}{81}\ell^\frac{3}{2}\zeta(3)\right]e^{-\frac{\pi \sqrt{3} \ell}{h}}\right)
\end{aligned}
\end{gather}
}
\noindent
We can clearly estimate this last expression from above by setting $h=6$, which we now do. It is easy to estimate the remaining infinite series in \eqref{estimate} by geometric series to obtain our asserted bound $\kappa$.
\end{proof}
\begin{remark}\label{remRankin}
It should be pointed out that in Theorem 5.1.2 in \cite{Rankin} cited in the proof above, a different type of Poincar\'e series is considered and also in the formulation  of the theorem, only actions of matrices in $\SLZ$ rather than Atkin-Lehner involutions on Poincar\'e series are studied. However, a close inspection of the proof reveals that the result is directly applicable to the situation of the Maass-Poincar\'e series $P_{1,-2,6}$ acted upon by the Atkin-Lehner involution $W_d$, $d=2,3,6$. 
\end{remark}
We are now in position to prove our main result, Theorem \ref{main}.
\begin{proof}[Proof of \Cref{main}]
Let $Q(x,y)=ax^2+bxy+cy^2=:[a,b,c]$ be a $\SLZ$-reduced quadratic form of discriminant ${{\D}}=-24n+1$, i.e., $b^2-4ac={{\D}}$, $|b|\leq a\leq c$, and $b>0$ whenever $|b|=a$ or $a=c$. Then exactly one matrix $\gamma_Q$ from those $12$ in \eqref{gamma} satisfies $Q\circ\gamma_Q^{-1}\in \P_{{\D}}$, see Lemma 3 and Table 1 in \cite{DM}. Further define the number $h_Q\in\{1,2,3,6\}$ and the $6$th root of unity $\zeta_Q$ by the relation
\[F|_{-2}\gamma_Q(\tau)=h_Q\zeta_Qq^{-\frac 1{h_Q}}+O(1).\]
Then one easily observes that $a\cdot h_Q\equiv 0\smod 6$ holds in all possible cases. We focus on the case $a\cdot h_Q=12$. Tables \ref{table1} and \ref{table2} list the quadratic forms $Q$ together with the associated data $\gamma_Q$, $h_Q$, and $\zeta_Q$, which are easily read off Table 1 in \cite{DM} and \eqref{cusp0}--\eqref{cusp13}. 

\begin{table}
\begin{tabular}{c|c|c|c|c}
$Q$ & $\gamma_Q$ & $h_Q$ & $\zeta_Q$ & $\varphi_Q$\\
\hline
$[2,-1,3n]$ & $\gamma_{0,0}$ & $6$ & $1$ & $-\tfrac{\pi}{12}$ \\
 &&&& \\
$[4,1,\tfrac{3n}{2}]$ & $\gamma_{\frac 12,1}$ & $3$ & $\zeta_6$ & $\frac{5\pi}{12}$ \\
 &&&& \\
$[6,-5,n+1]$ & $\gamma_{\frac 13,0}$ & $2$ & $1$ & $-\frac{5\pi}{12}$ \\
 &&&& \\
$[12,1,\tfrac n2]$ & $\gamma_\infty$ & $1$ & $1$ & $\tfrac{\pi}{12}$
\end{tabular}
\caption{Quadratic forms with $a\cdot h_Q=12$ for $n$ even}
\label{table1}
\end{table}

\begin{table}
\begin{tabular}{c|c|c|c|c}
$Q$ 
& $\gamma_Q$ & $h_Q$ & $\zeta_Q$ & $\varphi_Q$ \\
\hline
$[2,-1,3n]$ & $\gamma_{0,3}$ & $6$ & $-1$ & $\frac{11\pi}{12}$ \\
 &&&& \\
$[4,-3,\tfrac{3n+1}{2}]$ & $\gamma_{\frac 12,2}$ & $3$ & $\zeta_6^{-1}$ & $-\frac{7\pi}{12}$ \\
 &&&& \\
$[6,-5,n+1]$ & $\gamma_{\frac 13,1}$ & $2$ & $-1$ & $\frac{7\pi}{12}$ \\
\end{tabular}
\caption{Quadratic forms with $a\cdot h_Q=12$ for $n$ odd}
\label{table2}
\end{table}

Now for $\z_Q=-\tfrac b{2a}+\frac{\sqrt{24n-1}}{2a}i$ we find, by \Cref{bound}, that
\[|P|_0 \gamma_Q(\z_Q)|\in\left(M(n;a,h_Q)-\kappa, M(n;a,h_Q)+\kappa\right)\]
and
\[\arg(P_0\gamma_Q(\z_Q))\in\left(\varphi_Q-\arctan\left(\frac{\kappa}{M(n;a,h_Q)}\right),\varphi_Q+\arctan\left(\frac{\kappa}{M(n;a,h_Q)}\right)\right),\]
where 
\[M(n;a,h_Q)\left(1-\tfrac{2ah_Q}{2\pi\sqrt{24n-1}}\right) e^{2\pi \frac{\sqrt{24n-1}}{2ah_Q}}\]
and $\varphi_Q$ as given in Tables \ref{table1} and \ref{table2}  denotes the argument of the main term of $P|_0 \gamma_Q(\z_Q)$, which is given by $\arg(\zeta_Q)+\tfrac{b\pi}{12}$. If we now assume that $n\geq 54$, then we see that there cannot be any singular moduli with $a\cdot h_Q\neq 12$ matching those with $a\cdot h_Q=12$ because the difference 
\[\left|\left(1-\tfrac{2a_1h_{Q_1}}{2\pi\sqrt{24n-1}}\right)e^{2\pi \frac{\sqrt{24n-1}}{2a_1h_{Q_1}}}-\left(1-\tfrac{2a_2h_{Q_2}}{2\pi\sqrt{24n-1}}\right)e^{2\pi \frac{\sqrt{24n-1}}{2a_2h_{Q_2}}}\right|\]
would be larger than $2\kappa$ if $a_1h_{Q_1}=12$ and $a_2h_{Q_2}\neq 12$. The same is true for the distortion in the arguments among those which have $a\cdot h_Q=12$, since for $n\geq 54$ we have
\[\left|\arctan\left(\frac{\kappa}{M(n;a,h_Q)}\right)\right|\leq \frac{\pi}{24},\]
so in particular less than half of the smallest possible difference between two distinct $\varphi_Q$ from Tables \ref{table1} and \ref{table2}.
Hence all these values can only be simple zeros of $\widehat H_{{\D}}$ and by \Cref{propGalois}, the theorem is proven for $n\geq 54$. The remaining cases are checked by comparing with numerically computed examples.\footnote{In particular, we refer the reader to Sutherland's extensive table at \url{http://math.mit.edu/~drew/Pfiles/} for our test of the small cases.}. 
\end{proof}


\begin{thebibliography}{10}



\bibitem{BrOno} K. Bringmann and K. Ono, \emph{Coefficients of harmonic Maass forms}, Proceedings of the 2008 University of Florida Conference on Partitions, q-series, and modular forms.

\bibitem{BringmannOno} K. Bringmann and K. Ono, {\it An arithmetic formula for the partition function, } Proc. Amer. Math. Soc. {\bf 135} (2007), 3507--3514. 

\bibitem{BO} J. H. Bruinier and K. Ono, \emph{Algebraic formulas for the coefficients of half-integral weight harmonic weak Maass forms}, Adv. Math. {\bf 246} (2013), 198--219.

\bibitem{BOS} J. H. Bruinier, K. Ono, and A. V. Sutherland, \emph{Class polynomials for nonholomorphic modular functions}, preprint.

\bibitem{DM} M. Dewar and M. R. Murty, \emph{A derivation of the Hardy-Ramanujan formula from an arithmetic formula}, Proc. Amer. Math. Soc. {\bf 141} (2013), no. 6, 1903--1911.

\bibitem{Euler} L. Euler, \emph{Evolutio producti infiniti $(1-x)(1-xx)(1-x^3)(1-x^4)(1-x^5)(1-x^6)$ etc.,} Opera Omnia {\bf 1}, no. 3, 472--479.

\bibitem{Fay} J. D. Fay, \emph{Fourier coefficients of the resolvent for a Fuchsian group},
J. Reine Angew. Math. \textbf{293/294} (1977), 143--203.

\bibitem{FolsomMasri} A. Folsom and R. Masri, {\it Equidistribution of Heegner points and the partition function,} Math. Ann. {\bf 226} (2011), 3724--3759.

\bibitem{GKZ} B. Gross, W. Kohnen, and D. Zagier, \emph{Heegner points and derivatives of $L$-series II}, Math. Ann. \textbf{278} (1987), 497--562.

\bibitem{GR} I. S. Gradsteyn and I. M. Ryzhik, \emph{Table of Integrals, Series, and Products}, Academic Press, 6th ed. (2000).

\bibitem{HR} G. H. Hardy and S. Ramanujan, \emph{Asymptotic formulae in combinatory analysis}, Proc. London Math. Soc. {\bf 17} (1918), no. 2, 75--115.

\bibitem{Hejhal} D. A. Hejhal, \emph{The Selberg trace formula for
$PSL(2,\R)$}, Springer Lect. Notes in Math. \textbf{1001},
Springer-Verlag, Berlin, 1983.

\bibitem{KK} M. Koecher and A. Krieg, \emph{Elliptische Funktionen und Modulformen}, Springer-Verlag, 1st ed., 1998.

\bibitem{Lang} S. Lang, {\it Elliptic functions}, 2nd edition, Graduate Texts in Mathematics {\bf 112} (1987), Springer--Verlag, New York.

\bibitem{Lee} P. Lee and A. Zamorzaev, {\it Parity of the partition function and traces of singular moduli}, Int. J. Number Theory {\bf 8} (2012), 395--409.

\bibitem{LR} E. Larson and L. Rolen, \emph{Integrality Properties of the CM-values of Certain Weak Maass Forms}, Forum Mathematicum,  (2013), DOI: 10.1515/forum-2012-0112.

\bibitem{Luke} Y. L. Luke, \emph{Inequalities for Generalized Hypergeometric Functions}, J. Approx. Theory {\bf 5} (1972), 41--65.

\bibitem{Masser} D. Masser, \emph{Elliptic Functions and Transcendence}, Springer-Verlag, Lecture Notes in Mathematics {\bf 437}, 1975.

\bibitem{Ono}
K.~Ono, \emph{Unearthing the visions of a master: harmonic {M}aass forms and
  number theory}, Current Developments in Mathematics \textbf{2008} (2009),
  347--454.
  
\bibitem{Rademacher}
H. Rademacher, \emph{On the partition function $p(n)$}, Proc. London Math. Soc. (2) {\bf 43} (1937), 241--254.
  
\bibitem{Rankin} R. A. Rankin, \emph{Modular forms and functions}, Cambridge University Press, 1977.  

\bibitem{Schertz} R. Schertz, \emph{Weber's class invariants revisited}, J. Th\'eor. Nombres Bordeaux {\bf 14} (2002), no. 1, 325--343.
\end{thebibliography}
\end{document}